\documentclass[a4paper,12 pt]{amsart}
\usepackage{hyperref,latexsym}
\usepackage{enumerate}

\theoremstyle{plain}
\newtheorem{theorem}{Theorem}[section]

\newtheorem{corollary}[theorem]{Corollary}
\newtheorem{proposition}[theorem]{Proposition}
\theoremstyle{definition}
\newtheorem{definition}[theorem]{Definition}
\newtheorem{example}{Example}
\theoremstyle{remark}
\newtheorem{remark}{Remark}

\newcommand{\abs}[1]{\left|#1\right|}

\begin{document}

\title[Maximizing orbits for Convex billiards]
      {Maximizing orbits \\ for higher dimensional \\
      convex billiards}

\date{23 July 2008}
\author{Misha Bialy}
\address{Raymond and Beverly Sackler School of Mathematical Sciences, Tel Aviv University,
Israel} \email{bialy@post.tau.ac.il}

\subjclass[2000]{}

\begin{abstract}
The main result of this paper is, that for convex billiards in
higher dimensions, in contrast with 2D case, for every point on the
boundary and for every $n$ there always exist billiard trajectories
developing conjugate points at the $n$-th collision with the
boundary. We shall explain that this is a consequence of the
following variational property of the billiard orbits in higher
dimension. If a segment of an orbit is locally maximizing, then it
can not pass too close to the boundary. This fact follows from the
second variation formula for the Length functional. It turns out
that this formula behaves differently with respect to "longitudinal"
and "transversal" variations.

\end{abstract}

\maketitle

\section{Introduction and Background }
\label{sec:intro} This paper is motivated by a question by
J\"{u}rgen Moser about conjugate points for Birkhoff billiards in
dimensions higher than 2. The main result of this paper is that in
higher dimensions, in contrast with 2D case, for every point on the
boundary and for every $n$ there always exist billiard trajectories
developing conjugate points precisely at the $n$-th collision with
the boundary. This fact, as we shall see, is a consequence of the
following variational property of the billiard orbits in higher
dimension. There are no locally maximizing configurations, passing
too close to the boundary. This follows from the second variation
formula for the Length functional. It turns out that this formula
behaves differently with respect to "longitudinal" and "transversal"
variations. As an example of this calculation let me mention the
example of standard sphere where the only orbits which are locally
maximizing are that of the diameters.

A natural extension of Moser's question is, if there are variational
properties of  billiard configurations distinguishing standard
sphere among other bodies. This remains still open, though we
formulate a conjecture in this direction below.

Let me denote by $\Sigma$ a strictly convex hypersurface in the
Euclidean space $\mathbb{R}^d$, we shall assume throughout this
paper, that it is at least $C^2$-smooth with strictly positive
curvature at every point. Any billiard configuration determines a
sequence of collision points with the boundary, $ \{x_n
\}_{n\in\mathbb{Z}}, x_n\in\Sigma$. Billiard configurations are in
one to one correspondence with the critical points of the functional
$$ \Phi \{u_n\}=\sum_{n\in\mathbb{Z}}{L(u_n,u_{n+1})}, $$
where
$$L:\Sigma\times\Sigma\rightarrow\mathbb{R},L(x,y)=\abs
{x-y},$$ denotes the Euclidean distance function in $\mathbb{R}^d$.
More precisely, this correspondence means, that any finite segment
of trajectory $ \{x_n \}_{n\in [M,N]}$ is the critical point of the
function
$$\Phi_{M,N}(u_M,\ldots,u_N)=L(x_{M-1},u_M)+\sum_{i=M}^{N-1}
L(u_i,u_{i+1})+ L(u_N,x_{N+1}).$$
  Notice that  $L$ has singularities on the diagonal which complicate the
variational analysis of the critical points of the functional $\Phi$
and $\Phi_{M,N}$. In this paper we shall adopt the following:

\begin{definition}
\begin{enumerate}[(a)]

\item
The segment of billiard orbit $ \{x_n \}_{n\in [M,N]}$ is called
maximizing if is a local maximum for the function $\Phi_{M+1,N-1}$.
\item
An infinite orbit is called maximizing if any finite segment of
it is maximizing.
\item
 The sequence of tangent vectors $\xi_n\in T_{x_n}\Sigma$ at the
points of a billiard configuration $\{x_n\}$ is called Jacobi field
it appears as a variation field ,
$\xi_n=\frac{d}{d\varepsilon}\mid_{\varepsilon=0}(x(\varepsilon)_n)$
for a variation of the initial configuration.
\item
Two points $x_M,x_N$ of the billiard ball configuration are called
conjugate if there is a non zero Jacobi field vanishing at
$x_M,x_N$.
\end{enumerate}
\end{definition}
Let me remark that the last definition of conjugate points has very
clear meaning on the language of geometric optics. It means that for
the light ray starting at $x_M$ becomes focused at $x_N$ , so in
other words it is a "bright" point on the boundary.

For the plane convex billiards there are lot of maximizing billiard
configurations. They appear in the so called Aubry -Mather sets. For
example any billiard configuration tangent to a smooth convex
caustic is maximizing ( notice that by KAM type theorem of Lazutkin
(\cite{lazutkin}), there are infinitely many convex caustics near
the boundary, for any sufficiently smooth convex billiard table). On
the other hand, I have proved in (\cite {bialy1}) that only for
circles all configurations are maximizing, in other words for any
non circular billiard there always exist conjugate points. In
contrast with this in higher dimensions for any shape with no
exceptions there always exist conjugate points and in fact many of
them.
\begin {theorem}
\label{conjugate} Let $\Sigma\subset\mathbb{R}^d,d>2$ be any
$C^2$-smooth strictly convex hypersurface with positive curvature.
Then for any point $x\in\Sigma$ there always exist conjugate points
along infinitely many configurations starting at $x$.
\end {theorem}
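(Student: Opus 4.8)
The plan is to locate conjugate points as degeneracies of the Hessian of $\Phi_{M,N}$, and to produce them from the transversal part of the second variation, which is where higher dimensions differ essentially from the plane. First I would fix the starting point $x=x_0$ and, along a billiard segment $\{x_0,x_1,\ldots,x_n\}$, choose at each collision point an orthonormal frame of $T_{x_i}\Sigma$ adapted to the orbit, splitting each admissible variation field $\xi_i$ into a \emph{longitudinal} component (in the plane spanned by the two incoming/outgoing chords) and a \emph{transversal} component (orthogonal to that plane). The second variation formula for $L(x,y)=\abs{x-y}$ then block-diagonalizes into a longitudinal quadratic form, which coincides with the familiar $2$-dimensional expression and is nondegenerate for the positive (monotone twist) configurations in question, and a transversal quadratic form whose coefficients involve the chord lengths $\abs{x_i-x_{i+1}}$ and the normal curvatures of $\Sigma$. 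Conjugate points, i.e.\ nonzero Jacobi fields vanishing at $x_0$ and $x_n$, are precisely the kernel of this Hessian; since the longitudinal block carries no kernel, I expect every conjugate point to arise from the transversal block.

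Next I would analyze the transversal block as a discrete Sturm--Liouville operator. Reflection in the strictly convex hypersurface $\Sigma$ acts on transversal rays like reflection in a concave mirror seen from inside, hence it \emph{focuses} them, and the associated discrete Jacobi equation has a focusing potential proportional to the normal curvature divided by the chord length. The essential quantitative input, supplied by the variational property stated in the abstract (a maximizing segment cannot pass too close to the boundary), is that for segments hugging the boundary the transversal form fails to be negative, i.e.\ the transversal Jacobi field is forced to oscillate. By the discrete Morse index theorem this oscillation is counted by zeros of the Jacobi field, so such a segment carries conjugate points in its interior.

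To reach the conclusion I would construct, for the fixed point $x$, a one-parameter family of orbit segments emanating from $x$ whose initial direction tends to tangency with $\Sigma$. As the closest approach of the orbit to the boundary decreases, the chord lengths shrink and the curvature-to-length ratio in the transversal operator grows, so the first interior zero of the transversal Jacobi field occurs after fewer collisions; conversely a short, nearly diametral segment from $x$ is maximizing and carries no conjugate points. Running the initial direction continuously between these two regimes and applying the discrete Morse index count, the number of points conjugate to $x_0$ along the truncated segment $\{x_0,\ldots,x_n\}$ is nondecreasing in $n$ and nonzero for $n$ large; an intermediate-value argument on the family then yields, for every $n$, an orbit starting at $x$ whose first conjugate point falls exactly at the $n$-th collision. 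Letting $n$ range over all positive integers produces infinitely many such configurations, proving Theorem~\ref{conjugate}.

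The main obstacle, I expect, is the control of the near-tangent orbits: these live near the grazing singularity of $L$ on the diagonal, so one must show that a genuine billiard segment from $x$ remains close to the boundary for a prescribed number of collisions and that the transversal focusing time depends monotonically and continuously on the depth of the orbit. Establishing this monotonicity cleanly, together with the rigorous block-diagonalization and the discrete Sturm comparison that turns ``non-maximizing'' into ``has an interior zero,'' is the technical heart of the argument.
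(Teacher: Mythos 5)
Your proposal contains a genuine gap at its foundation: the block-diagonalization of $\delta^2\Phi$ into longitudinal and transversal forms along an arbitrary orbit of a general convex hypersurface. A billiard configuration in $\mathbb{R}^d$, $d>2$, is not planar in general, so the ``longitudinal'' $2$-plane at $x_i$ (spanned by the incoming and outgoing chords) does not match the one at $x_{i+1}$; the coupling operators $L_{12},L_{21}$ therefore mix the two components, and the shape operator $S$ appearing in $L_{11},L_{22}$ preserves the splitting only when the chord direction is principal. The paper uses such a splitting only in the symmetric example (a surface containing a planar curve and reflection-symmetric about its plane), where the cross term vanishes \emph{by the symmetry assumption}. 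Two further steps are asserted without support: that the longitudinal block carries no kernel (this is essentially one of the paper's open conjectures, not a known fact), and the entire discrete Sturm--Liouville/Morse-index apparatus together with the monotonicity of the ``focusing time'' in the depth of the orbit, which you yourself flag as the technical heart and which is nowhere established.

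The paper's actual argument is much softer and avoids all of this. For fixed $x$ and $n$ let $M_{x,n}\subset B^*_x\Sigma$ be the set of initial (co)directions whose orbit segment $\{x_0=x,\ldots,x_{n+1}\}$ has $\delta^2\Phi_{1,n}$ negative semi-definite. It is nonempty (global maximizers of the length functional with prescribed endpoints exist), closed by continuity of the quadratic form, and --- this is the only place Theorem~\ref{maximizing} enters --- compactly contained in the \emph{open} ball, since maximizing segments cannot have small reflection angles. A nonempty compact proper subset of the ball has a nonempty boundary; at a boundary direction the form is negative semi-definite but cannot be definite (else nearby directions would also lie in $M_{x,n}$), so it has a nontrivial kernel, and a kernel element satisfies the discrete Jacobi equation and extends by zero to a Jacobi field vanishing at $x_0$ and $x_{n+1}$. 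Letting $n$ vary gives infinitely many configurations through $x$ carrying conjugate points. You correctly identified the two essential ingredients --- conjugate points as kernel elements of the Hessian, and the role of the ``no maximizers near the boundary'' theorem --- but the route you propose through oscillation theory requires structure (planarity, decoupling, monotone focusing) that a general $\Sigma$ does not provide.
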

\begin{theorem}
\label{maximizing} There exists a constant $C(\Sigma)$ such that for
any maximizing configuration $\{x_n\}$ the angle of reflections
$\varphi_n$ at any vertex $x_n$ can not be too small:
$$\varphi_n>C(\Sigma)$$
\end{theorem}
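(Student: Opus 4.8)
The plan is to reduce everything to the second variation at a single vertex, and then to exploit the fact that, because $d>2$, the tangent space $T_{x_n}\Sigma$ contains directions transversal to the billiard ray. Fix an interior vertex $x_n$ of a maximizing segment and freeze all the other vertices. Since the configuration is a local maximum of $\Phi_{M+1,N-1}$, the restricted function
$$f(u)=L(x_{n-1},u)+L(u,x_{n+1})=\abs{u-x_{n-1}}+\abs{x_{n+1}-u},\qquad u\in\Sigma,$$
must attain a local maximum at $u=x_n$; equivalently, the one-vertex Hessian is a diagonal block of the full Hessian of $\Phi_{M+1,N-1}$ and so inherits negative semidefiniteness. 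The whole theorem will then follow by showing that this negative semidefiniteness fails, along a suitable transversal direction, whenever $\varphi_n$ is small.

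First I would write down the constrained second variation. Put $r=\abs{x_n-x_{n-1}}$, $s=\abs{x_{n+1}-x_n}$, let $\nu,\mu$ be the incoming and outgoing unit directions, and let $N$ be the outward unit normal at $x_n$. The ambient gradient is $\nabla f=\nu-\mu$, and the reflection law says precisely that it is normal, $\nu-\mu=2\sin\varphi_n\,N$, where $\varphi_n$ is the angle between the ray and the tangent hyperplane. The ambient Hessian is the positive semidefinite form $\tfrac1r P_\nu^{\perp}+\tfrac1s P_\mu^{\perp}$, each term a scaled orthogonal projection off the corresponding ray direction. Restricting to a curve on $\Sigma$ through $x_n$ with velocity $\xi\in T_{x_n}\Sigma$ and using $\langle N,\ddot u\rangle=-\,\mathrm{II}(\xi,\xi)$ for the (positive) second fundamental form $\mathrm{II}$ of the convex surface, I obtain
$$\delta^2 f(\xi)=\frac1r\,\abs{P_\nu^{\perp}\xi}^2+\frac1s\,\abs{P_\mu^{\perp}\xi}^2-2\sin\varphi_n\,\mathrm{II}(\xi,\xi).$$

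The decisive step is to separate longitudinal from transversal directions. Because $\nu-\mu$ is normal, the tangential projections of $\nu$ and $\mu$ coincide; call their common unit direction $e$. For $\xi$ orthogonal to $e$ inside $T_{x_n}\Sigma$ one has $P_\nu^{\perp}\xi=P_\mu^{\perp}\xi=\xi$, so the positive part is the full $(\tfrac1r+\tfrac1s)\abs{\xi}^2$; for a longitudinal $\xi$ it is instead suppressed by a factor $\sin^2\varphi_n$, which is exactly why the two families behave so differently. Since $\dim T_{x_n}\Sigma=d-1\ge 2$, such a transversal $\xi$ exists — this is where $d>2$ enters, and where the contrast with the plane (where $T_{x_n}\Sigma$ is one-dimensional) appears. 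For transversal $\xi$ the formula reads $\delta^2 f(\xi)=(\tfrac1r+\tfrac1s)\abs{\xi}^2-2\sin\varphi_n\,\mathrm{II}(\xi,\xi)$, so negative semidefiniteness forces $(\tfrac1r+\tfrac1s)\abs{\xi}^2\le 2\sin\varphi_n\,\mathrm{II}(\xi,\xi)$. Bounding $\mathrm{II}(\xi,\xi)\le k_{\max}\abs{\xi}^2$ by the maximal principal curvature and $\tfrac1r+\tfrac1s\ge 2/\mathrm{diam}(\Sigma)$, I conclude
$$\sin\varphi_n\ \ge\ \frac{1}{k_{\max}\,\mathrm{diam}(\Sigma)},$$
which gives the asserted bound with $C(\Sigma)=\arcsin\big(1/(k_{\max}\,\mathrm{diam}(\Sigma))\big)$.

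I expect the main difficulty to be bookkeeping rather than conceptual: pinning down the constrained second variation with the correct signs — in particular isolating the term $\langle\nabla f,\ddot u\rangle=-2\sin\varphi_n\,\mathrm{II}(\xi,\xi)$ coming from the normal part of the gradient together with the second fundamental form — and justifying carefully that a local maximum of $\Phi_{M+1,N-1}$ really restricts to a local maximum of the single-vertex function $f$. A secondary point is to check that $r,s$ and the curvatures are uniformly controlled, so that $C(\Sigma)$ depends only on $\Sigma$; this follows from compactness together with the strict convexity and positivity of curvature assumed throughout.
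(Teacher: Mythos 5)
Your proposal is correct and follows essentially the same route as the paper: both arguments restrict the (necessarily negative semidefinite) second variation to a single interior vertex, observe that for a direction $\xi$ in $T_{x_n}\Sigma$ orthogonal to the tangential projection of the ray the positive term is the full $\bigl(\tfrac1r+\tfrac1s\bigr)\abs{\xi}^2$ while the curvature term carries the factor $\sin\varphi_n$, and note that such a transversal $\xi$ exists only because $d>2$. Your explicit bound $\sin\varphi_n\ge 1/(k_{\max}\,\mathrm{diam}\,\Sigma)$ is exactly the inequality the paper records in the Corollary following the theorem, so there is nothing to add.
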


Let me come back to the question by J.Moser. Analyzing the proofs of
the Theorems \ref{maximizing} and, \ref{conjugate} below, one
naturally comes to the following conjectures. Conjecturally there
are no convex hypersurfaces different from spheres, with the
property that all billiard orbits are maximizing with respect to
"longitudinal" perturbations. A somewhat related version of this
conjecture is the following: there are no convex hypersurfaces
different from spheres such that every billiard trajectory has a
non-vanishing longitudinal Jacobi field.  Let me remark, that the
application of my previous ideas from (\cite{bialy1}) lead naturally
to some new integral geometric quantities measuring certain
"roundedness" for bodies of constant width , and interesting
inequalities of isoperimetric type. However, I can not claim for the
moment that these inequalities distinguish spheres among other
convex bodies. Let me mention also that, if the conjectures are true
they could be considered as a kind of "non-holonomic" version of a
theorem by R Sine, saying that the spheres are the only convex
hypersurfaces with the property that any billiard configuration lies
in a 2-plane (see \cite{tabachnikov} for the proof and discussions).
I hope to discuss these questions elsewhere.

Let me finish this introduction with the following remark. It is in
fact an old idea going back at least to Hedlund and Morse, and alive
till nowadays, to construct invariant sets of Hamiltonian systems by
variational methods. However it is always a problem to decide if
such a set has zero, positive or full measure in the phase space.
Let me give here two examples: for planar billiards the set of
maximizing orbits has positive measure, by the result of
\cite{lazutkin}, mentioned above. It was proved in \cite{bialy1},
that the set of maximizing orbits can not occupy the set of full
measure in the phase space. It is a very interesting question what
happens in this perspective for convex billiards of higher
dimension. What can be said about the set of maximizing orbits or
about the set of maximizing with respect to longitudinal
perturbations. Let me remark also, that it was proved by M.Berger
that there are no convex caustics in higher dimensional billiards
except for the ellipsoids (see {\cite{berger}).

In the next  two section we shall derive the computation of second
derivatives of the function $L$ and the second variation formulas.
This will be crucial ,in fact, for the proof of the main theorems.
Then we shall provide examples in Section 4, and prove the existence
of conjugate points in Section 5.
\section*{Acknowledgements}
It is my pleasure to thank Misha Sodin and Serezha Tabachnikov for
useful and stimulating discussions.
\section {Derivatives of the distance function $L$}
Let me introduce first the Billiard ball map. This is symplectic
diffeomorphism of the unit ball cotangent bundle of $\Sigma$ for
which $L$ serves as a generating function. Everywhere in this paper
we will identify the cotangent and tangent bundles of $\Sigma$ with
the help of Riemannian metric induced on $\Sigma$. As usual for any
point $x \in \Sigma$ and for a inward unit vector $z \in
T_x\mathbb{R}^d$ one can define a vector $v \in B_x \Sigma , \abs
v<1$ by the projection $\pi_x$ of $z$ along the normal $n_x$ so
that, $$v=z - \hat{v}n_x \text { where }
\hat{v}=\sqrt{1-v^2}=\sin\varphi,$$  and $\phi$ stands for the angle
between $z$ and $T_x \Sigma$. By means of these notations the
billiard ball map  $T:B^{*} \Sigma \rightarrow B^{*} \Sigma$ is
defined implicitly by the following
\begin{align}
\label{orbit} T:(x,v)\mapsto (y,w) \Leftrightarrow
\nonumber & \\
 L_1(x,y)=-\pi_x(\frac{y-x}{\abs{y-x}})=-v,\quad
L_2(x,y)=\pi_y(\frac{y-x}{\abs{y-x}})=w
\end{align}
Here $L_1,L_2$ denote the (co)vectors, which are differentials of
$L$ with respect to $x,y$ respectively. Notice that the formulas
(\ref {orbit}) mean precisely that the orbits of $T$ are in one to
one correspondence with the extremals of $\Phi$ which are precisely
the billiard configurations. In fact the calculation of the mixed
derivative below (the so called twist condition) imply that $T$ is a
genuine diffeomorphism. These formulas for the second partial
derivatives of $L$ can be obtained by a direct calculation.
\begin{proposition}  For  any two distinct points $x,y\in\Sigma$ the linear
operators $L_{11}, L_{12},L_{21} L_{22}$ act by the following
formulas:
\begin{enumerate}[(a)]
\medskip
\item $ L_{11}(x,y) : T_x\Sigma \rightarrow T_x\Sigma,\qquad
L_{11}(\xi)=(\xi-<v,\xi>v)/L-S(\xi)\hat{v}$,
\medskip
\item $L_{22}(x,y) : T_y\Sigma \rightarrow T_y\Sigma,\qquad
L_{22}(\eta)=(\eta-<w,\eta>w)/{L}-S(\eta)\hat{w}$,
\medskip
\item $ L_{12}(x,y) : T_y\Sigma \rightarrow T_x\Sigma,\qquad
L_{12}(\eta)=(-\pi_x(\eta)+<w,\eta>v)/{L}$
\medskip
\item $ L_{21}(x,y) : T_x\Sigma \rightarrow T_y\Sigma,\qquad
L_{21}(\xi)=(-\pi_y(\xi)+<v,\xi>w)/{L}$
\end{enumerate}
\medskip
for any $\xi\in T_x\Sigma,\eta\in T_y\Sigma$. Here $S$ denotes the
shape operator of $\Sigma$: $S(\xi)=-\nabla_{\xi }n_x$. Moreover the
operators $L_{12}$and $L_{21}$ are isomorphisms which are adjoint
one to the other.
\end{proposition}
\begin{remark}
The last property of $L_{12}, L_{21}$ being isomorphisms is the so
called twist condition, replacing the Legandre condition of calculus
of variations for continuous time.
\end{remark}
\begin {proof}
We shall derive (a) first and then (d), all other formulas are
analogous. For (a) $y$ is fixed and $x$ varies in the direction of
$\xi\in T_x\Sigma$. Denote by$\nabla, \tilde{\nabla}$ the standard
connections on $\Sigma$ and $\mathbb{R}^d$ respectively. We have:
\begin{align*}L_{11}(\xi)=\nabla_{\xi}L_1(x,y)=
\nabla_{\xi}(\frac{x-y}{L}-<\frac{x-y}{L},n_x>n_x)= \\
=\pi_x\tilde{\nabla_{\xi}}(\frac{x-y}{L}-<\frac{x-y}{L},n_x>n_x)=\\
=\pi_x(\frac{\xi}{L}-\frac{x-y}{L^2}<L_1,\xi>)-
<\frac{x-y}{L},n_x>\nabla_{\xi}n_x=\\
=\frac{\xi}{L}-\frac{<L_1,\xi>}{L}L_1+<L_1,n_x>S(\xi)=\\
=\frac{\xi}{L}-\frac{<v,\xi>}{L}v-S(\xi)\hat{v}
\end{align*}
In order to prove (d) let $\gamma(t)$ be any curve with
$\gamma(0)=x$ and $\dot{\gamma}(0)=\xi$. Then we have
\begin{align*}
L_{21}(\xi)=
\frac{d}{dt}\mid_{t=0}(\frac{y-x}{L}-\frac{<y-x,n_y>n_y}{L})=\\
=\pi_y(-\frac{\xi}{L}-\frac{y-x}{L^2}<L_1,\xi>)=\\
=-\pi_y(\frac{\xi}{L})-\frac{L_2}{L}<L_1,\xi>=
-\frac{\xi}{L}+<\frac{\xi}{L},n_y>n_y+\frac{w}{L}<v,\xi>
\end{align*}
\end{proof}

\section{Second variation formulas}
Let $\{x_n\}_{n\in\mathbb{Z}}$ be a billiard configuration. Pick
$M\leq N$ and the tangent vectors $ \xi_n\in T_{x_n}\Sigma,\quad
n\in [M,N]$ . With the help of the operators of second partials the
quadratic form of the second variation for the functional
$\Phi_{M,N}$ is the following:
\begin{align}
\delta^2\Phi_{MN}(\xi_M,\ldots,\xi_N)=
\sum_{n=M}^{N}<(L_{11}(x_n,x_{n+1})+L_{22}(x_{n-1},x_n))\xi_n,\xi_n>+
\\
\nonumber +2\sum_{n=M}^{N-1}<L_{12}(x_n,x_{n+1})\xi_{n+1},\xi_n>.
\end{align}
Let me apply now the formulas of the Proposition in a very special
case, the orbit of one reflection, $\{x_{-1},x_{0}, x_{1}\}$, i.e.
$M=0, N=0$. We have
\begin{align}
\delta^2\Phi_{00}(\xi_0)=(\xi_0^2-<v_0,\xi_0>^2)(\frac{1}{L(x_{-1},x_0)}+\frac{1}{L(x_{0},x_1)})
-2B(\xi_0,\xi_0)\hat{v}_0,
\end{align}
where $B(\xi_0,\xi_0)=<S(\xi_0),\xi_0>$ is the second fundamental
form, and as before $v_0=\pi_{x_0}((x_1-x_0)/L(x_0,x_1))$ and
$\hat{v}_0=\sqrt{1-v^2_0}=\sin\varphi_0$ is the sinus of the angle
of reflection. Notice that this formula gives different answers for
the vectors $\xi_0$ orthogonal or parallel to $v_0$, as follows:

\begin{equation}
\label{orthogonal}
 \delta^2\Phi_{00}(\xi)=\xi^2
(\frac{1}{L(x_{-1},x_0)}+\frac{1}{L(x_{0},x_1)})
-2B(\xi,\xi)\hat{v}_0, \quad \text{for } \xi \bot v_0,
\end{equation}
\begin{equation}
\label{parallel}
 \delta^2\Phi_{00}(\xi)=\xi^2\hat{v}_0^2
(\frac{1}{L(x_{-1},x_0)}+\frac{1}{L(x_{0},x_1)})
-2B(\xi,\xi)\hat{v}_0,  \quad \text{for }\xi \| v_0
\end{equation}
Using these formulas we obtain the following
\begin{theorem}
\label{three} There exists a constant $C(\Sigma)>0$, such that for
any piece of billiard trajectory $\{x_{-1}, x_0,x_1\}$ having angle
of reflection $\varphi$ at the point $x_0$ smaller than $C(\Sigma)$
the following property holds:
$$\delta^2\Phi_{00}(\xi)>0 \text{ for all }\xi \bot v_0$$ and
$$\delta^2\Phi_{00}(\xi)<0 \text{ for all }\xi \| v_0$$
\end{theorem}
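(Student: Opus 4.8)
The plan is to analyze the two key formulas \eqref{orthogonal} and \eqref{parallel} directly, exploiting the fact that both expressions share the same structure $a\,\xi^2 - 2B(\xi,\xi)\hat v_0$, but with different coefficients $a$ on the leading positive term. The crucial observation is that the angle of reflection $\varphi_0$ enters through $\hat v_0 = \sin\varphi_0$, so that as $\varphi_0 \to 0$ we have $\hat v_0 \to 0$ while $\hat v_0^2 \to 0$ even faster. This asymmetry is precisely what drives the opposite signs in the two cases, and isolating it is the heart of the argument.

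First I would set up uniform bounds coming from the compactness of $\Sigma$. Since $\Sigma$ is a compact $C^2$ strictly convex hypersurface with strictly positive curvature, the shape operator $S$ is uniformly bounded and uniformly positive definite: there exist constants $0 < k_{\min} \le k_{\max} < \infty$ with $k_{\min}\,\xi^2 \le B(\xi,\xi) \le k_{\max}\,\xi^2$ for all tangent $\xi$. Likewise the chord lengths $L(x_{-1},x_0)$ and $L(x_0,x_1)$ are bounded above by the diameter $D$ of $\Sigma$, so that the factor $\tfrac{1}{L(x_{-1},x_0)} + \tfrac{1}{L(x_0,x_1)} \ge \tfrac{2}{D}$ is bounded below by a positive constant. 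These are the only global inputs needed.

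Next I would treat the two cases separately. For the orthogonal case \eqref{orthogonal}, I want $\delta^2\Phi_{00}(\xi) = \xi^2\bigl(\tfrac{1}{L(x_{-1},x_0)}+\tfrac{1}{L(x_0,x_1)}\bigr) - 2B(\xi,\xi)\hat v_0 > 0$. Using the bounds above this is guaranteed once $\tfrac{2}{D} - 2k_{\max}\hat v_0 > 0$, i.e. once $\sin\varphi_0 = \hat v_0 < \tfrac{1}{D\,k_{\max}}$; this fixes a threshold on $\varphi_0$ from the orthogonal side. For the parallel case \eqref{parallel} I want the opposite sign, $\delta^2\Phi_{00}(\xi) = \xi^2\hat v_0^2\bigl(\tfrac{1}{L(x_{-1},x_0)}+\tfrac{1}{L(x_0,x_1)}\bigr) - 2B(\xi,\xi)\hat v_0 < 0$. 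Factoring out $\hat v_0$ (positive for $\varphi_0 \neq 0$) reduces the inequality to $\hat v_0\,\xi^2\bigl(\tfrac{1}{L(x_{-1},x_0)}+\tfrac{1}{L(x_0,x_1)}\bigr) < 2B(\xi,\xi)$; since the left side is $O(\hat v_0)$ and the right side is bounded below by $2k_{\min}\xi^2 > 0$, this holds once $\hat v_0$ is small enough, say $\hat v_0 < \tfrac{k_{\min}}{(2/L_{\min})}$ after bounding the chords below. I would then take $C(\Sigma)$ to be the arcsine of the minimum of the two thresholds, completing the proof.

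The main subtlety, rather than any deep obstacle, is making sure the lower bound on the chord lengths used in the parallel case is legitimate: one needs $L(x_{-1},x_0)$ and $L(x_0,x_1)$ bounded \emph{below} away from zero, which is not automatic since consecutive collision points could in principle be close. However, this is exactly the content one expects to control, and it is handled by noting that for a strictly convex hypersurface a genuine reflection with a fixed positive lower curvature forces a minimal chord length depending only on $\Sigma$; alternatively, one observes that the parallel-case inequality only requires the \emph{upper} bound $\tfrac{1}{L(x_{-1},x_0)}+\tfrac{1}{L(x_0,x_1)} \le \tfrac{4}{L_{\min}}$, and that for very short chords the curvature term $B(\xi,\xi)$ dominates anyway, so the threshold on $\varphi_0$ can be chosen uniformly. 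Once this uniform separation of scales in $\hat v_0$ versus $\hat v_0^2$ is pinned down against the fixed geometric constants $k_{\min}, k_{\max}, D$, both sign conclusions follow simultaneously for all $\varphi_0 < C(\Sigma)$.
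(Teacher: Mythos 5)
Your treatment of the orthogonal case is fine and even a bit more economical than the paper's: bounding $\frac{1}{L(x_{-1},x_0)}+\frac{1}{L(x_0,x_1)}$ below by $2/D$ and $B(\xi,\xi)$ above by $k_{\max}\xi^2$ does give $\delta^2\Phi_{00}(\xi)\ge \xi^2\bigl(2/D-2k_{\max}\hat v_0\bigr)>0$ once $\sin\varphi_0<1/(Dk_{\max})$. The problem is the parallel case, and the gap sits exactly at the point you flag as a ``subtlety''. There is no lower bound on the chord lengths that is uniform in the angle: for a strictly convex hypersurface with normal curvatures between $k_{\min}$ and $k_{\max}$ one has $L(x_0,x_1)\le 2\sin\varphi_0/k_{\min}$, so the chords shrink to zero precisely in the small-angle regime the theorem is about. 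Your first fix (``a genuine reflection forces a minimal chord length depending only on $\Sigma$'') is therefore false, and your second fix (``for very short chords the curvature term dominates anyway'') is not true at the level of crude global bounds: after dividing (\ref{parallel}) by $\hat v_0$ the required inequality is $\hat v_0\bigl(\tfrac{1}{L(x_{-1},x_0)}+\tfrac{1}{L(x_0,x_1)}\bigr)<2B(\xi,\xi)/\xi^2$, and the comparison estimate $L\ge 2\sin\varphi_0/k_{\max}$ only gives $\hat v_0\bigl(\tfrac{1}{L_1}+\tfrac{1}{L_2}\bigr)\le k_{\max}$, so your scheme closes only under the pinching hypothesis $k_{\max}<2k_{\min}$, which is not assumed. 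The two competing terms are of the \emph{same} order in $\hat v_0$; the negativity comes from a factor of $2$, not from a separation of scales.

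What is actually needed --- and what the paper invokes by ``represent the surface near $x_0$ by a graph of a convex function and use Taylor expansions'' --- is the local asymptotics $\hat v_0/L(x_0,x_1)\to \tfrac12 k_{v_0}(x_0)$ as $\varphi_0\to 0$ (and likewise for the incoming chord), where $k_{v_0}(x_0)=B(\xi,\xi)/\xi^2$ for $\xi\|v_0$ is the normal curvature in the direction of the orbit. With this, $\hat v_0\bigl(\tfrac{1}{L_1}+\tfrac{1}{L_2}\bigr)\to k_{v_0}$ while the curvature term contributes $2k_{v_0}$, so $\delta^2\Phi_{00}(\xi)/(\xi^2\hat v_0)\to -k_{v_0}<0$; the crucial point is that the \emph{same} normal curvature appears on both sides of the inequality, which is lost if you replace one occurrence by $k_{\max}$ and the other by $k_{\min}$. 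By $C^2$-compactness of $\Sigma$ this limit is uniform in $x_0$ and in the direction, which is what yields the uniform threshold $C(\Sigma)$. So the architecture of your proof is right, but the parallel case cannot be closed with the global constants $k_{\min},k_{\max},D$ alone; it needs the planar-type chord-length expansion.
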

\begin{proof}Follows immediately from the explicit
formulas above. Indeed for $\varphi$ small enough in
(\ref{orthogonal})
$(\frac{1}{L(x_{-1},x_0)}+\frac{1}{L(x_{0},x_1)})$ becomes large
while $B(\xi,\xi)\hat{v}_0$ tends to zero. This gives the first
inequality of the theorem. In order to prove the second, one needs
to represent the surface near the point $x_0$ by a graph of a convex
function and uses Taylor expansions near $x_0$ in (\ref{parallel}).
We omit the details, since they are the same as in a planar case.
\end{proof}

The different behavior of the second variation for transversal and
longitudinal perturbations in the last theorem was discussed on a
different language of \emph{fronts} by L.Bunimovich
(\cite{bunimovich}).

\section{Proof of the Theorem \ref{maximizing}, Examples}
 Theorem \ref{maximizing} is an immediate consequence of
 Theorem \ref{three}, because any segment of a locally maximizing orbit
 has to have second variation negative semi-definite.
\begin{corollary}
Let $x\in \Sigma$ be a point with a "small" second fundamental form:
$$k_{\xi}=B(\xi,\xi)<\frac{1}{D}, \text { for all }
\xi\in T_x\Sigma \text { with } \abs {\xi}=1,$$ then no maximizing
segment passes through $x$.
\end{corollary}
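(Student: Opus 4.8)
The plan is to show that the curvature bound $k_\xi < 1/D$ forces the transversal part of the second variation to be strictly positive, contradicting the fact (established in the remark opening Section 4) that any maximizing segment must have negative semi-definite second variation. The key observation is that $D$ should be taken to be the diameter of $\Sigma$, so that for \emph{any} two points on $\Sigma$ the chord length satisfies $L \le D$; this is what lets us control the term $\frac{1}{L(x_{-1},x_0)}+\frac{1}{L(x_0,x_1)}$ from below uniformly, without needing smallness of the reflection angle.

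First I would fix a maximizing segment and suppose, for contradiction, that it passes through a point $x=x_0$ satisfying the hypothesis. I would pick any unit vector $\xi\perp v_0$ in $T_{x_0}\Sigma$ and evaluate the transversal second variation formula \eqref{orthogonal}. Since $L(x_{-1},x_0)\le D$ and $L(x_0,x_1)\le D$, each reciprocal is at least $1/D$, so the first term is at least $2/D$. For the second term I would use $\hat v_0 = \sin\varphi_0 \le 1$ together with $B(\xi,\xi)=k_\xi < 1/D$, giving
\begin{equation*}
\delta^2\Phi_{00}(\xi) \ge \frac{2}{D} - 2B(\xi,\xi)\hat v_0 > \frac{2}{D} - \frac{2}{D} = 0.
\end{equation*}
Thus $\delta^2\Phi_{00}(\xi)>0$ for this transversal direction $\xi$.

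Finally I would note that a strictly positive value of $\delta^2\Phi_{00}$ in even a single direction contradicts negative semi-definiteness, and hence contradicts maximality of the segment through $x_0$. The local two-step variation $\delta^2\Phi_{00}$ is precisely the relevant diagonal block of the full quadratic form $\delta^2\Phi_{M+1,N-1}$ restricted to variations supported at the single vertex $x_0$, so positivity here indeed obstructs the segment being a local maximum; this is the content of the reduction used for Theorem \ref{maximizing}. I expect the only genuinely delicate point to be the correct identification of the constant: one must verify that $D$ can be chosen independently of the orbit (the diameter of $\Sigma$ works, being a finite geometric invariant of the compact hypersurface), so that the hypothesis $k_\xi<1/D$ is a genuine pointwise condition rather than one depending on the configuration. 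The estimate on $\hat v_0$ is harmless since $\hat v_0\le 1$ always, which is in fact why no smallness of $\varphi_0$ is needed here, in contrast with Theorem \ref{three}.
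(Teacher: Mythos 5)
Your proposal is correct and is essentially the paper's own proof: both bound each chord length by the diameter $D$ to get $\frac{1}{L(x_{-1},x_0)}+\frac{1}{L(x_0,x_1)}\ge \frac{2}{D}$, then use $B(\xi,\xi)<\frac{1}{D}$ and $\hat v_0=\sin\varphi_0\le 1$ in formula \eqref{orthogonal} to conclude $\delta^2\Phi_{00}(\xi)>0$ for a transversal unit $\xi$, contradicting negative semi-definiteness along a maximizing segment. The only difference is that you spell out the identification of $D$ with the diameter and the restriction of the full quadratic form to a single vertex, which the paper leaves implicit.
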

\begin{proof}
It follows from  the identity (\ref{orthogonal}) for $\abs{\xi}=1$
$$\delta^2\Phi_{00}(\xi)=
(\frac{1}{L(x_{-1},x_0)}+\frac{1}{L(x_{0},x_1)})
-2B(\xi,\xi)\hat{v}_0>\frac{2}{D}-\frac{2}{D}\sin{\varphi_0}>0.$$
\end{proof}
\begin{remark} This corollary can be regarded as analogous to the
construction of Riemannian metrics with  "big bumps", where no
minimal geodesics pass through the top of the bump, see
\cite{b-p},\cite{bangert} for details. Let me mention also that no
such construction can be invented for planar Birkhoff billiards
where through any point on the boundary pass infinitely many
maximizing orbits due to existence of caustics near the boundary.
\end{remark}
\begin{example}
Consider  for example Ellipsoid in $\mathbb{R}^3$
$$E=\{\frac{x_1^2}{a_1^2}+\frac{x_2^2}{a_2^2}+\frac{x_3^2}{a_3^2}=1\},\text{ for } a_1\leq a_2\leq a_3.$$
Consider the point $A=(a_1,0,0)$ on the shortest axes. Then the
principle curvatures of $A$ are
$\frac{a_1}{a_2^2},\frac{a_1}{a_3^2}$  and the diameter is $D=2a_3$.
Therefore, if $2a_1a_3<a^2_2$ , then there are no maximizing orbits
passing through $A$.
\end{example}
Our next example is somewhat opposite to the previous one. It shows
that for certain shapes there are in fact many infinite maximizing
orbits.
\begin{example}
Let $ \gamma$ be a planar smooth strictly (of positive curvature)
convex closed curve. Let $\Sigma$ be smooth hypersurface in
$\mathbb{R}^3$ containing $\gamma$ and symmetric with respect to the
plane containing $\gamma$. Then obviously any orbit of the planar
billiard inside $\gamma$ remains an orbit of the billiard inside
$\Sigma$. Let me denote the principle curvatures of $\Sigma$ at the
points of $\gamma$ by $k_1$-in the direction of $\gamma$ , and
$k_2$-in the orthogonal direction. Let $\{x_n\}_{n\in \mathbb{Z}}$
be an infinite Aubry-Mather orbit in the plane of $\gamma$. Then all
the angles of reflections $\varphi_n$ are bounded away from zero,
i.e. there is $C_1>0$, so that $\sin\varphi_n>C_1$. We claim, that
if $k_2$ is sufficiently large then the orbit $\{x_n\}_{n\in
\mathbb{Z}}$ remains maximizing inside the billiard in $\Sigma$.
\begin{proof}In order to prove the claim we shall examine the second
variation. Any field $\xi_n$ along the orbit $\{x_n\}$ can be spited
into a sum of two, the first is in the direction of $\gamma$ and the
second is in the orthogonal direction,
$\xi_n=\xi^{\prime}_n+\xi^{\prime\prime}_n $. Then we have for the
second variations $$ \delta^2\Phi(\xi)= \delta^2\Phi(\xi^{\prime})+
\delta^2\Phi(\xi^{\prime\prime})+2\delta^2\Phi(\xi^{\prime},\xi^{\prime\prime})$$
where the mixed term
$2\delta^2\Phi(\xi^{\prime},\xi^{\prime\prime})$ vanishes, due to
the symmetry assumptions. Since the orbit $\{x_n\}_{n\in
\mathbb{Z}}$ is the Aubry-Mather orbit inside $\gamma$ then it is
maximizing with respect to planar perturbations and therefore
$\delta^2\Phi(\xi^{\prime})<0$. Let us estimate the
$\delta^2\Phi(\xi^{\prime\prime})$. Denote by
$$K_1=\max_{\gamma}k_1,\qquad K_2=\min_{\gamma}k_2.$$ For the planar
billiard one has, $$
L(x_n,x_{n+1})>2\sin\varphi_n/K_1>2C_1/K_1,\text{ for all
}n\in\mathbb{Z}.$$ Let $a_n, b_n$ be the diagonal and the
off-diagonal elements of the quadratic form
$\delta^2\Phi(\xi^{\prime\prime})$ respectively. Then $$
a_n=(\frac{1}{L(x_{n-1},x_n)}+\frac{1}{L(x_{n},x_{n+1})})
-2k_{2}(x_n)\sin\varphi_n <K_1/C_1-2K_2:=a$$ Also $$ \abs
{b_n}=1/L(x_n, x_{n+1})<K_1/2C_1:=b.$$ Let the number $K_2$ be big
enough so that $a<0 \text { and } -a>2b$. Taking into account all
the estimations we obtain:
$$\delta^2\Phi_{MN}(\xi^{(2)})=\sum_{n=M}^N a_n{\xi^{\prime\prime}_n}^2+2\sum_{n=M}^{N-1}
b_n\xi^{\prime\prime}_n \xi^{\prime\prime}_{n+1}<a\sum_{n=M}^N
{\xi^{\prime\prime}_n}^2+2b\sum_{n=M}^{N-1}\xi^{\prime\prime}_n
\xi^{\prime\prime}_{n+1}
$$
It is an easy exercise to see that the last expression is negative.
This completes the proof.
\end{proof}
\end{example}
\begin{example} This is example of the standard sphere. In this
example non of the orbits except diameters are maximizing. For
instance all periodic orbits are not maximizing. Let me remark that
they are maximizing for the variational principle on the set of
closed n-gons. This shows the difference  between the two
variational principle . We refer the reader to recent paper
(\cite{farber}) for the results on periodic trajectories in general
and estimation of there number in higher dimension. Any orbit of the
billiard inside the standard sphere is completely determined by the
angle of reflection $\alpha$ (for sphere they are all the same).
Computing the quadratic form of the second variation with respect to
transversal perturbations gives the following matrix$$
\begin{pmatrix}
a & b & 0 & \cdots & 0 & 0 \\[1mm]
b & a & b & 0 & \cdots & 0 \\[1mm]
\vdots & \vdots & \vdots & \vdots & \vdots & \vdots \\[1mm]
0 &
0 & 0& \cdots b & a & b \\
0 & 0 & 0 & \cdots & b & a
\end{pmatrix},
\text{ where  } a=\frac{\cos2\alpha}{\sin\alpha} ,\quad
b=-\frac{1}{2\sin\alpha}
$$
It is an easy exercise to show that for any angle $\alpha$ choosing
the size of this matrix is big enough one gets the matrix which is
not negative definite. Let me remark here that the quadratic form
with respect to the longitudinal perturbations gives the same matrix
with the parameters $a=-\sin\alpha, b=\sin\alpha/2$, which is
negative definite.
\end{example}

\section{Conjugate points, Proof of Theorem \ref{conjugate}}
Let me introduce some notations. For a fixed point $x\in\Sigma$ and
$n\geq 1$ introduce the subset $M_{x,n}$ of $B^*_x\Sigma$ consisting
of those (co)vectors $v$ such that the corresponding billiard
trajectory $\{x_0=x, x_1,\ldots,x_{n+1}\}$ is maximizing , that is
$\delta^2\Phi_{1,n} $ is negative semi-definite. Obviously $M_{x,n}$
is not empty since given $x,y\in\Sigma$ one can set
$x_0=x,x_{n+1}=y$ and find $\{x_1,\ldots,x_n\}$
 giving the global maximum to the length functional $\Phi_{1,n}$.
 Moreover $M_{x,n}$ is closed by the continuity of the quadratic form
 $\delta^2\Phi_{1,n} $. It follows from Theorem \ref{maximizing}
 that this set is compact in the open ball $B^*_x\Sigma$, because no
 maximizing orbits near the boundary of the ball are allowed.
 Now it is easy to prove Theorem \ref{conjugate}
 \begin{proof}
Let $v$ be any vector lying in the boundary $\partial M_{x,n}$ and
consider the orbit corresponding to $(x,v)$: $\{x_0=x,
x_1,\ldots,x_{n+1}\}$. Then it follows that the quadratic form of
this orbit $\delta^2\Phi_{1,n} $ is negative semi-definite and also
has a nontrivial Kernel. Then there exists  a field $\{\xi_1,
\ldots,\xi_n\}$ lying in the Kernel. Then it must satisfy the
following equation:
\begin{align*}
(L_{22}(x_{k-1},x_k)+L_{11}(x_k,x_{k+1}))\xi_k+             &\\
L_{21}(x_{k-1},x_k)\xi_{k-1}+ L_{12}(x_{k},x_{k+1})\xi_{k+1}=0,
\text { for all  } k=1,\ldots,n
\end{align*}
This is precisely the equation of the Jacobi fields.
\newline
Thus $\{\xi_0=0,\xi_1, \ldots,\xi_n,\xi_{n+1}=0\}$ is a Jacobi field
vanishing at the ends. Therefore, $x_0, x_{n+1}$ are conjugate. This
yields the proof.
\end{proof}
Notice that $M_{x,n+1} \subseteq M_{x,n}$ and by the compactness it
follows that their intersection is not empty $\bigcap
M_{x,n}\neq\emptyset$.
\begin{corollary} For any point $x\in\Sigma$ there exists
infinite semi-orbit $\{x_n \}_{n\geq 0}, x_0=x$ starting at $x$
which is a maximizing.
\end{corollary}
Let me remark that the semi-orbit in the last  Corollary is not
claimed to be an infinite orbit , see the Example 1 where no
infinite orbit passes through a  certain point. It would be
interesting to know any example different from bodies of constant
width, where there are maximizing orbits passing through every point
of $\Sigma$.
\begin{remark}
Let me explain, that the compacts $M_{x,n}$ are in fact rather
"fat". Denote by  $M^\prime_{x,n}$  the set of all those $v\in
B^*_x\Sigma$ such that the corresponding orbit $\{x_0=x,
x_1,\ldots,x_{n+1}\}$ has negative definite quadratic form
$\delta^2\Phi_{1,n}$ . Obviously $M^\prime_{x,n}\subseteq M_{x,n}$.
Moreover the following important inclusion holds:
$$M^\prime_{x,n+1}\subseteq M_{x,n},$$
which means that any proper subsegment of a maximizing segment has a
non-degenerate second variation form. This is of course a well known
fact in Riemannian case. It was proved  for twist maps in
(\cite{mms}) ( in the case of higher dimensional billiards the twist
condition is that the operators $L_{12} \text { and } L_{21}$ are
isomorphisms and the  proof of (\cite{mms}) goes through with no
change). We refer also to (\cite{bm}) for more discussions on the
twist maps.

\end{remark}



\end{document}